\newtheorem{theorem}{Theorem}[section]
\newtheorem{lemma}[theorem]{Lemma}
\newtheorem{claim}[theorem]{Claim}
\newtheorem{problem}[theorem]{Problem}
\newtheorem{example}[theorem]{Example}
\theoremstyle{definition}
\newtheorem{definition}[theorem]{Definition}
\newcommand{\Z}{\ensuremath{\mathbb Z}}
\begin{document}

\title[An obstruction to embedding $2$-dimensional complexes]{An obstruction to embedding $2$-dimensional complexes into the $3$-sphere}

\author{Kazufumi Eto}
\address{Department of Mathematics, Nippon Institute of Technology, Saitama, 345-8501, Japan}
\email{etou@nit.ac.jp}

\author{Shosaku Matsuzaki}
\address{Integrated Arts and Sciences, Waseda University, 1-6-1 Nishiwaseda, Shinjuku-ku, Tokyo, 169-8050, Japan}
\email{shosaku@aoni.waseda.jp}

\author{Makoto Ozawa}
\address{Department of Natural Sciences, Faculty of Arts and Sciences, Komazawa University, 1-23-1 Komazawa, Setagaya-ku, Tokyo, 154-8525, Japan}
\email{w3c@komazawa-u.ac.jp}
\thanks{The third author is partially supported by Grant-in-Aid for Scientific Research (C) (No. 26400097), The Ministry of Education, Culture, Sports, Science and Technology, Japan}

\subjclass[2010]{57Q35 (Primary), 57N35 (Secondary)}

\keywords{embedding, CW complex, multibranched surface, 3-sphere, obstruction}

\begin{abstract}
We consider an embedding of a $2$-dimensional CW complex into the $3$-sphere, and construct it's dual graph.
Then we obtain a homogeneous system of linear equations from the $2$-dimensional CW complex in the first homology group of the complement of the dual graph.
By checking that the homogeneous system of linear equations does not
have an integral solution, we show that some $2$-dimensional CW complexes cannot be embedded into the 3-sphere.
\end{abstract}

\maketitle

\section{Introduction}

It is a fundamental problem to determine whether or not there exists an embedding from a topological space into another one.
The Menger--N\"{o}beling theorem ({\cite[Theorem 1.11.4.]{E}}) shows that any finite $n$-dimensional CW complex can be embedded into the $2n+1$-dimensional Euclidian space $\mathbb{R}^{2n+1}$.
Kuratowski (\cite{K}) proved that a $1$-dimensional complex $G$ cannot be embedded into $\mathbb{R}^2$ if and only if $G$ contains the complete graph $K_5$ or the bipartite graph $K_{3,3}$ as a subspace.
One can consider two extensions of Kuratowski's theorem, that is, to determine whether or not there exists an embedding from a $2$-dimensional complex $X$ into $\mathbb{R}^3$ or $\mathbb{R}^4$.
In this paper, we consider the former case, namely, the embeddings of $2$-dimensional complexes into $\mathbb{R}^3$, and give a necessary condition for a $2$-dimensional complex, whose $1$-skeleton is a closed $1$-manifold, to be embeddable into $\mathbb{R}^3$.
It is shown in \cite{MSTW} that the following algorithmic problem is decidable: given a $2$-dimensional simplicial complex, can it be embedded in $\mathbb{R}^3$?
As a remark on the latter case, in general, it is known that the union of all $n$-faces of a $(2n+2)$-simplex cannot be embedded in $\mathbb{R}^{2n}$ for any natural number $n$ ({\cite[1.11.F]{E})}.

Throughout this paper, we will work in the piecewise linear category.
As a matter of convenience, we introduce the following complexes.

\begin{definition}[Multibranched surface]
Let $S_n$ denote the quotient space obtained from a disjoint union of $n$ copies of $\mathbb{R}^2_+=\{(x,y)|y\ge 0\}\subset \mathbb{R}^2$ by gluing together along their boundary $\partial \mathbb{R}^2_+=\{(x,y)|y=0\}$ for each positive integer $n$.



A second countable Hausdorff space $X$ is called a {\em multibranched surface} if $X$ contains a disjoint union of simple closed curves $l_1,\ldots, l_n$, each of which we call a {\em branch}, satisfying the following:
\begin{itemize}
\item For each point $x \in l_1\cup\cdots\cup l_n$, there exists an open neighborhood $U$ of $x$ and a positive integer $i$ such that $U$ is homeomorphic to $S_i$.
\item For each point $x\in X-(l_1\cup\cdots\cup l_n)$, there exists an open neighborhood $U$ of $x$ such that $U$ is homeomorphic to $\mathbb{R}^2$.
\end{itemize}


We call each component, which is denoted by $e_i$, of $X-(l_1\cup\cdots\cup l_n)$ a {\em sector}.
\end{definition}

Hereafter, we assume that a multibranched surface $X$ is compact and all sectors of $X$ are orientable.

\begin{definition}[Algebraic degree]
Then we define the {\em algebraic degree} $ad_{e_i}(l_j)$ of an oriented sector $e_i$ on an oriented branch $l_j$ as 
\[
ad_{e_i}(l_j)[l_j]:=(f_i)_* \Big( \big[ \partial M_i \cap f_i^{-1}(l_j) \big] \Big)
\]
in $H_1(l_j ; \mathbb{Z})$, where $M_i$ is the closure of $e_i$ and $(f_i)_*$ is the induced homomorphism of an inclusion map $f_i: M_i \to X$.
See Figure \ref{algebraic_degree} for example.
\begin{figure}[htbp]
	\begin{center}
	\includegraphics[trim=0mm 0mm 0mm 0mm, width=.4\linewidth]{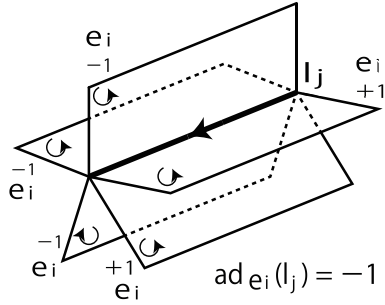}
	\end{center}
	\caption{Counting the algebraic degree $ad_{e_i}(l_j)$}
	\label{algebraic_degree}
\end{figure}
\end{definition}

If a multibranched surface was embedded into the $3$-sphere, then there is a possibility of several embeddings near branches for sectors depending on their circular permutations. See Figure \ref{circular_permutation} for example.

\begin{figure}[htbp]
	\begin{center}
	\includegraphics[trim=0mm 0mm 0mm 0mm, width=.6\linewidth]{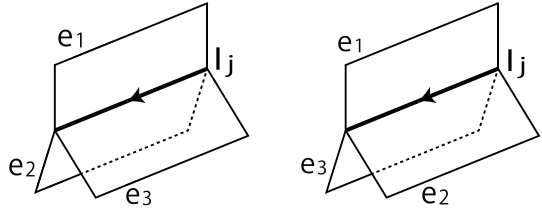}
	\end{center}
	\caption{Circular permutations of sectors}
	\label{circular_permutation}
\end{figure}

\begin{definition}[Abstract dual graph]
Suppose that we have fixed a circular permutation of sectors on each branch.
Then we can construct a directed graph as follows.
First we take a product $e_i \times [-1, 1]$ of each oriented sector $e_i$ and put $e_i^{\pm}=e_i \times \{ \pm 1\}$.
Next we glue $e_i^{\pm}$ along their boundaries depending on the circular permutation of sectors on each branch, and obtain closed surfaces denoted by $R_1, \ldots, R_t$.
Finally we construct the {\em abstract dual graph} whose vertex $v_j$ corresponds to a closed surface $R_j$, and there is a directed edge $e_i^*$ from $v_k$ to $v_j$ if and only if  $e_i^{+} \subset R_j$ and $e_i^- \subset R_k$.
See Figure \ref{dual_graph}.
\begin{figure}[htbp]
	\begin{center}
	\begin{tabular}{cc}
	\includegraphics[trim=0mm 0mm 0mm 0mm, width=.5\linewidth]{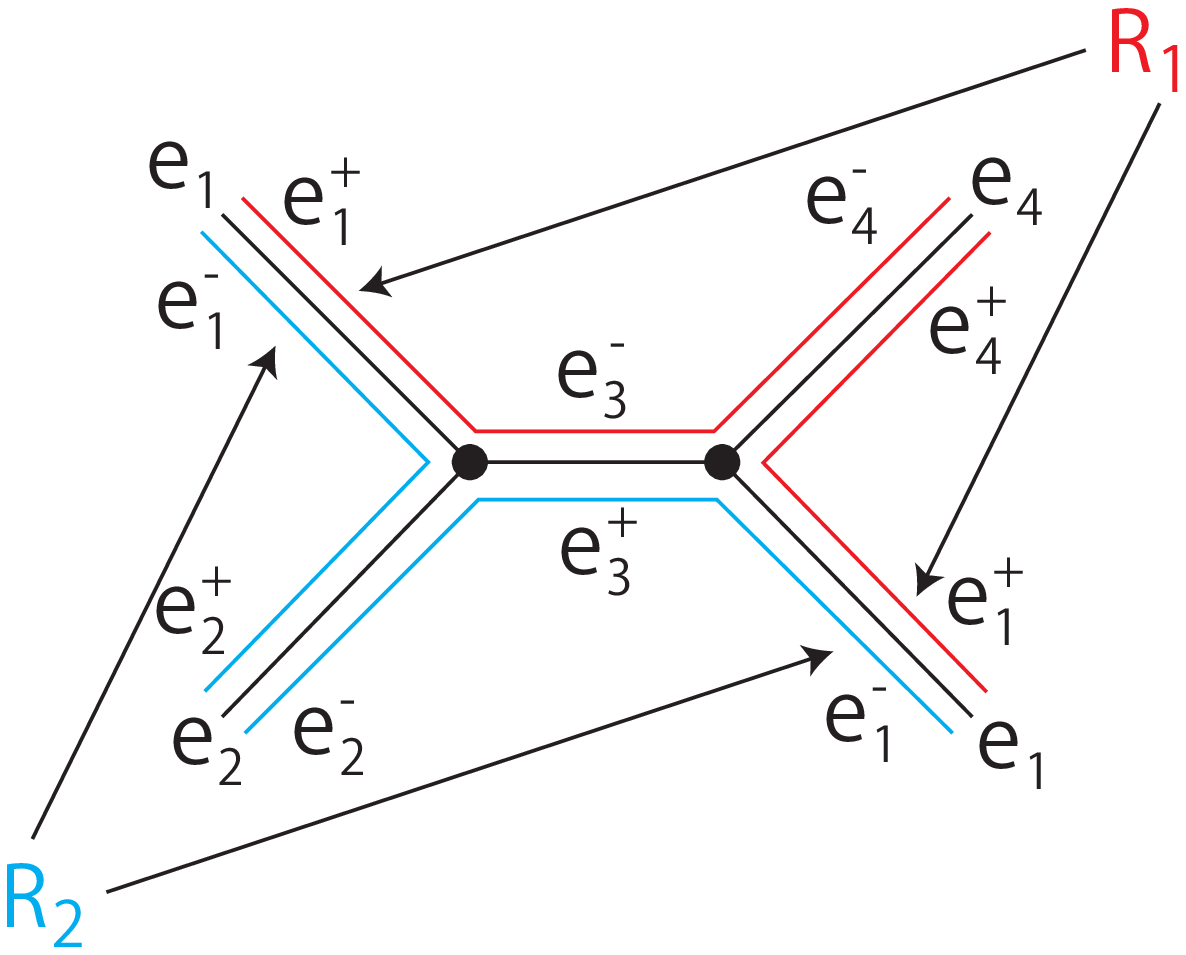}&
	\includegraphics[trim=0mm 0mm 0mm 0mm, width=.25\linewidth]{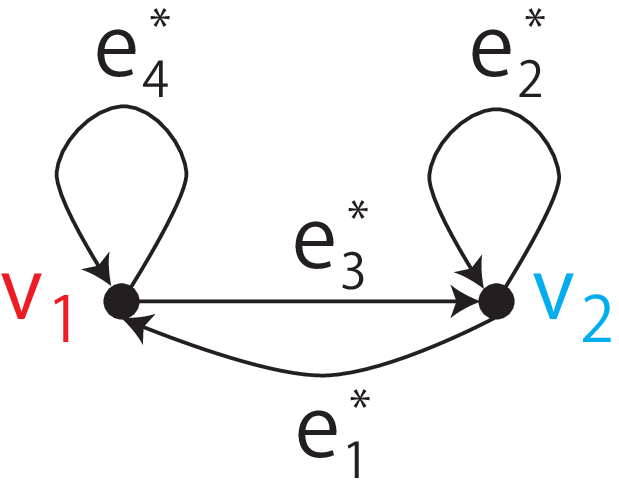}\\
	$X$ & $G_X$
	\end{tabular}
	\caption{An abstract dual graph $G_X$ of a multibranched surface $X$}
	\label{dual_graph}
	\end{center}
\end{figure}
\end{definition}

A multibranched surface $X$ is said to be {\em orientable} if all closed surfaces $R_1, \ldots, R_t$ are orientable.

\begin{definition}[Algebraic degree matrix]
Let $G_X$ be an abstract dual graph of a multibranched surface $X$.
Take a spanning tree $T$ of $G_X$.
Denote the sectors by $e_1,\ldots,e_m$ which correspond to edges $e_1^*, \ldots, e_m^*$ not belong to $T$.
We define the $m\times n$ {\em algebraic degree matrix} $A_T=(a_{ij})$ by 
\[
a_{ij} := ad_{e_i}(l_j),
\]
where $m$ is the first Betti number of $G_X$ and $n$ is the number of branch of $X$.
\end{definition}

\begin{example}\label{projective_plane}
We regard the real projective plane as a multibranched surface $X=e^0\cup e^1\cup e^2$ as shown in Figure \ref{real_projective_plane}.
Then the algebraic degree of $e^2$ on the branch $l=e^0\cup e^1$ is $ad_{e^2}(l)=2$, and the abstract dual graph $G_X$ of $X$ is a bouquet with a vertex $v$ corresponding to $S^2=e^2_+\cup e^2_-$ and a loop $(e^2)^*$ corresponding to $e^2$.
The spanning tree $T$ is a single vertex $v$, and the algebraic degree matrix is $A_T=[2]$.
\begin{figure}[htbp]
	\begin{center}
	\begin{tabular}{ccc}
	\includegraphics[trim=0mm 0mm 0mm 0mm, width=.25\linewidth]{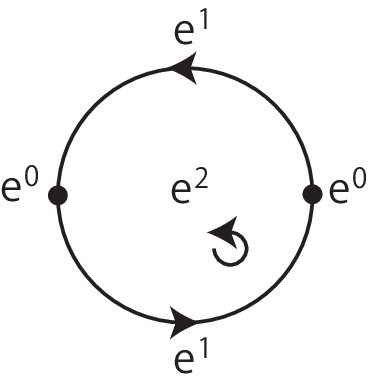}&
	\includegraphics[trim=0mm 0mm 0mm 0mm, width=.45\linewidth]{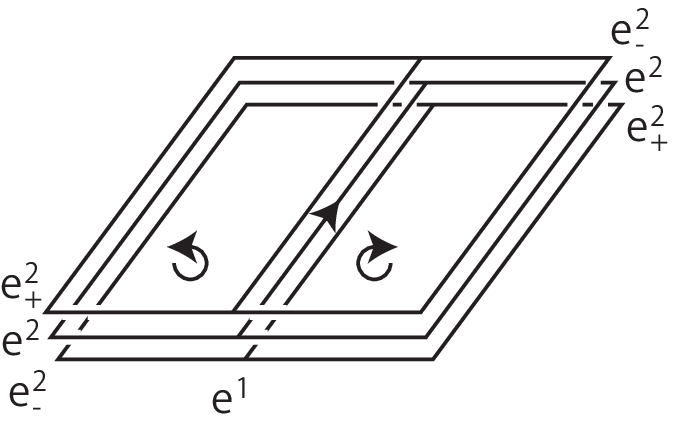}&
	\includegraphics[trim=0mm 0mm 0mm 0mm, width=.2\linewidth]{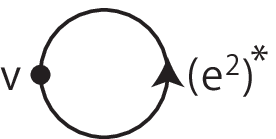}\\
	$X$ & $X\times I$ & $G_X$
	\end{tabular}
	\caption{The real projective plane as a multibranched surface, and its abstract dual graph}
	\label{real_projective_plane}
	\end{center}
\end{figure}
\end{example}

The following is a main theorem in this paper.
By using this theorem, we will give critical examples in Section 3.

\begin{theorem}\label{main}
Let $X$ be a connected, compact, orientable, multibranched surface.
If for each abstract dual graph $G_X$ of $X$, one of the following conditions (1) and (2) holds, then $X$ cannot be embedded into the $3$-sphere $S^3$, where $m$ is the first Betti number of $G_X$ and $n$ is the number of branch of $X$.
\begin{enumerate}
\item $m>n$.
\item $m\le n$ and there exists a spanning tree $T$ of $G_X$ such that the greatest common divisor of all $m$-minor determinants of the algebraic degree matrix $A_T$.
\end{enumerate}
\end{theorem}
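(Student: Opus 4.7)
The plan is to assume an embedding $X \hookrightarrow S^3$ and exploit the natural realization of the dual graph $G_X$ that results. First I would realize $G_X$ in $S^3$: place each vertex $v_j$ in the complementary region of $X$ bounded by $R_j$, and realize each edge $e_i^*$ as an arc that crosses the sector $e_i$ transversally in a single interior point. By Alexander duality, $H_1(S^3 \setminus G_X;\mathbb{Z})$ is then free abelian of rank $m = b_1(G_X)$, and after fixing a spanning tree $T$ the meridians $\mu_1,\ldots,\mu_m$ of the non-tree edges $e_1^*,\ldots,e_m^*$ form a $\mathbb{Z}$-basis (the meridians of tree edges are expressible via the vertex relations).

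The core step is the homological identity
\[
\sum_{j=1}^{n} a_{ij}\,[l_j] \;=\; \pm\,\mu_i \qquad (i=1,\ldots,m)
\]
in $H_1(S^3 \setminus G_X;\mathbb{Z})$. To prove it, I would consider, for each non-tree sector $e_i$, the punctured surface $M_i \setminus B_i$ obtained by removing a small open disk around the unique point $M_i \cap e_i^*$. This is a $2$-chain in $S^3 \setminus G_X$ whose oriented boundary is $\partial M_i$ together with a meridian of $e_i^*$; combined with the defining formula $(f_i)_*[\partial M_i] = \sum_j a_{ij}[l_j]$ of the algebraic degree, this gives the claim.

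Writing each $[l_j] = \sum_{k=1}^{m} \lambda_{kj}\mu_k$ in the meridian basis and stacking the $m$ identities yields a matrix equation $A_T L^{\top} = D$, where $L = (\lambda_{kj})$ is an $m \times n$ integer matrix and $D$ is an $m \times m$ diagonal matrix with entries $\pm 1$, so $\det D = \pm 1$. The Cauchy--Binet formula then gives
\[
\pm 1 \;=\; \det(A_T L^{\top}) \;=\; \sum_{|S|=m}\det((A_T)_S)\,\det(L_S),
\]
the sum taken over $m$-element subsets $S$ of column indices. If $m>n$ the sum on the right is empty, a contradiction, which settles (1). If $m \le n$, the greatest common divisor of the $m$-minors of $A_T$ must divide $\pm 1$, so it equals $1$; contrapositively, if this $\gcd$ is not $1$, no embedding can exist, which settles (2).

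The hard part, in my view, is the careful verification of the homological identity: one needs a consistent choice of orientations on branches, sectors, and dual edges so that the signs in front of $\mu_i$ match the signs appearing in the expression of $[\partial M_i]$ via the algebraic degrees, and one must check that the $2$-chain $M_i \setminus B_i$ really lies in $S^3 \setminus G_X$ and meets no other edge of $G_X$. Once this is established, the remainder is a formal application of Alexander duality, a change of basis to the non-tree meridians, and the Cauchy--Binet formula.
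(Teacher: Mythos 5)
Your proposal is correct and follows essentially the same route as the paper: realize the dual graph in the complement of the embedded $X$, derive the relation $\sum_{j} a_{ij}[l_j] = -[m_i]$ in $H_1(S^3 - G_X;\mathbb{Z})$ from the punctured sector, express the branch classes in the basis of non-tree meridians, and conclude that $A_T$ admits an integral right inverse up to sign. The only difference is cosmetic: where you invoke Cauchy--Binet to extract both the contradiction for $m>n$ and the unimodularity of the gcd of $m$-minors, the paper packages the same linear algebra into Lemma \ref{minor}.
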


Theorem \ref{main} still holds even if we replace the 3-sphere with homology 3-spheres.
All arguments used in the proof of Theorem \ref{main} still work on homology 3-spheres since we have used only homological conditions of the 3-sphere.

\section{Proof of Theorem \ref{main}}

\begin{lemma}\label{minor}
Let $m, n>0$
and
$f : \Z^n\to\Z^m$ be a homomorphism
presented by the $m\times n$ matrix $A$.
Then the following are equivalent:
\begin{enumerate}
\item
there is a homomorphism  $g : \Z^m\to\Z^n$ such that the map $fg$
is the identity,
\item
$m\leq n$ and
the general common divisor of the $m$ minors of $A$ is one.
\end{enumerate}
\end{lemma}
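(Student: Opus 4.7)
The plan is to establish the equivalence using two classical tools from integer matrix theory: the Cauchy--Binet formula, which handles $(1) \Rightarrow (2)$, and the Smith normal form, which handles $(2) \Rightarrow (1)$.

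For $(1) \Rightarrow (2)$, I would let $B$ be the $n \times m$ integer matrix representing $g$, so that $AB = I_m$. Since $f$ admits a right inverse it is surjective, so $A$ has rank $m$, which forces $m \leq n$. Applying the Cauchy--Binet formula to $AB = I_m$ gives
\[
1 \;=\; \det(AB) \;=\; \sum_{S} \det(A_S)\,\det(B_S),
\]
where $S$ ranges over the $m$-element subsets of $\{1, \ldots, n\}$, $A_S$ is the $m \times m$ submatrix of $A$ with columns indexed by $S$, and $B_S$ is the $m \times m$ submatrix of $B$ with rows indexed by $S$. Writing $d$ for the gcd of the $m$-minors of $A$, every term on the right is divisible by $d$, so $d \mid 1$ and hence $d = 1$.

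For $(2) \Rightarrow (1)$, I would invoke the Smith normal form: there exist $P \in \mathrm{GL}_m(\Z)$ and $Q \in \mathrm{GL}_n(\Z)$ with $PAQ = D$, where $D$ is the $m \times n$ matrix having the invariant factors $d_1 \mid d_2 \mid \cdots \mid d_m$ in the diagonal positions $(i,i)$ and zeros elsewhere. The ideal generated by the $m$-minors is preserved under multiplication by unimodular matrices, and for $D$ this ideal is generated by $d_1 d_2 \cdots d_m$; combined with hypothesis $(2)$, this forces each $d_i = 1$, so $D = [\,I_m \mid 0\,]$. Let $E$ denote the $n \times m$ matrix obtained by stacking $I_m$ on top of an $(n-m) \times m$ zero block, so that $DE = I_m$, and set $B := QEP$. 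Then
\[
AB \;=\; (P^{-1}DQ^{-1})(QEP) \;=\; P^{-1}(DE)P \;=\; P^{-1}I_m P \;=\; I_m,
\]
so the homomorphism $g$ represented by $B$ is the required section.

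Neither direction is genuinely difficult; both invoke standard textbook results. The one point worth verifying carefully is the invariance of the ideal generated by the $m$-minors under unimodular equivalence, which is what allows the gcd hypothesis in $(2)$ to transfer from $A$ to its Smith normal form $D$ and conclude that every invariant factor equals $1$.
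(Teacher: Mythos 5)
Your proof is correct, but both halves take a different route from the paper's. For $(1)\Rightarrow(2)$ the paper does not use Cauchy--Binet: it splits $\Z^n\cong\ker f\oplus\operatorname{im}g$, observes that the resulting change-of-basis matrix is unimodular and contains $A$ as a block, and reads off the conclusion from a Laplace expansion along that block. Your Cauchy--Binet computation $1=\det(AB)=\sum_S\det(A_S)\det(B_S)$ reaches the same conclusion more directly, lands squarely on the minors of $A$ (the paper's phrasing of which matrix sits inside the unimodular one requires some care to parse), and avoids choosing a basis for the kernel. For $(2)\Rightarrow(1)$ the paper gives an explicit, elementary construction of the section: writing $1=c_1D_1(A)+\cdots+c_tD_t(A)$ as a B\'ezout combination of the $m$-minors, it builds a generalized cofactor matrix $C=(\varphi(B_{kl}))^{T}$ with $AC=I$, where $B_{kl}$ is $A$ with its $l$-th column replaced by the $k$-th standard basis vector. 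Your Smith normal form argument instead reduces to $D=[\,I_m\mid 0\,]$, using the (correctly identified, and genuinely the one point needing verification) invariance of the determinantal ideal under unimodular equivalence. The trade-off is the usual one: the paper's construction is self-contained and produces the inverse by an explicit formula, while your argument outsources the work to a standard structure theorem and is shorter to check. Both are complete proofs of the lemma.
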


\begin{proof}
$(1) \Rightarrow (2)$\quad
By the assumption, $f$ is injective and we have $m\leq n$.
Since the kernel $K$ of $g$ is a submodule of $\Z^n$,
it is free and
there is an isomorphism from $K\oplus\Z^m$ to $\Z^n$
sending $(a, b)\in K\oplus\Z^m$ to $a+f(b)$.
Hence the matrix presenting this isomorphism
contains $A$ as a submatrix
and its determinant is $1$ or $-1$.
It follows that 
the general common divisor of the $m$ minors of $A$ must be one.

$(2) \Rightarrow (1)$\quad
Let $D_1, \dots, D_t$ be the $m$ minors 
of an $m\times n$ matrix $(X_{ij})$ whose entries are indeterminants.
Then we may regard them as maps sending an $m\times n$ matrix
$B$
to an integer $D_l(B)$ for each $l$.
By the assumption,
there are integers $c_1, \dots, c_t$
satisfying $c_1D_1(A)+\cdots+c_tD_t(A)=1$.
Put $\varphi=c_1D_1+\cdots+c_tD_t$ be a map from $\Z^{m\times n}$
to $\Z$
and, for each $k, l$, 
$B_{kl}=(b_{ij})$ an $m\times n$ matrix
where $b_{kl}=1$, $b_{il}=0$ if $i\ne k$ 
and $b_{ij}=a_{ij}$ if $j\ne l$, where $A=(a_{ij})$.
And put $C$
be the transpose of an $m\times n$ matrix $(\varphi(B_{kl}))$.
Then $CA=1$
and $C$ presents an inverse map $g$ of $f$.
\end{proof}

\begin{proof}[Proof of Theorem \ref{main}]
Suppose that a connected, compact, orientable, multibranched surface $X$ can be embedded into the $3$-sphere $S^3$.
Then by regarding each component of $S^3-X$ as a vertex, and joining two vertices if the corresponding components are adjacent by a sector, we obtain a geometric dual graph for $X$ in $S^3$.
We note that each complementary region of $S^3-X$ has a connected boundary, and that the geometric dual graph is isomorphic to an abstract dual graph $G_X$ of $X$.

Take a spanning tree $T$ of $G_X$, and let $e_1,\ldots,e_m$ be the sectors which correspond to edges $e_1^*, \ldots, e_m^*$ not belong to $T$.
Take a meridian $m_i\subset e_i$ of $e_i^*$, and let $l_1,\ldots, l_n$ be the branches of $X$.

Since each $M_i$ is orientable, it holds that $[\partial M_i]+[f_i^{-1}(m_i)]=0$ in $H_1\big( M_i-f_i^{-1}(G_X \cap X) \big)$ and hence $(f_i)_* \big( [\partial M_i]+[f_i^{-1}(m_i)] \big) =0$ in $H_1(S^3-G_X; \mathbb{Z})$.
Therefore, we have
\[
\sum_{j=1}^n ad_{e_i}(l_j) [l_j] + [m_i]=0
\]
for $i=1,\ldots , m$.
This can be represented by the following matrix:
\[
A_T
\left[
	\begin{array}{c}
	\lbrack l_1 \rbrack \\
	\vdots \\
	\lbrack l_n \rbrack
	\end{array}
\right]
+
\left[
	\begin{array}{c}
	[m_1] \\
	\vdots \\
	\lbrack m_m \rbrack
	\end{array}
\right]
=
\left[
	\begin{array}{c}
	0 \\
	\vdots \\
	0 
	\end{array}
\right]
\]
Since $\{[m_1], \ldots , [m_m]\}$ is a generator for $H_1(S^3-G_X; \mathbb{Z})$, there exists an $n\times m$ matrix $B$ such that:
\[
A_TB
\left[
	\begin{array}{c}
	\lbrack m_1 \rbrack \\
	\vdots \\
	\lbrack m_n \rbrack
	\end{array}
\right]
+
\left[
	\begin{array}{c}
	\lbrack m_1 \rbrack \\
	\vdots \\
	\lbrack m_m \rbrack 
	\end{array}
\right]
=
\left[
	\begin{array}{c}
	0 \\
	\vdots \\
	0 
	\end{array}
\right]
\]
Thus, we have $A_T B=-E$ and by Lemma \ref{minor}, 
$m\leq n$ and
the general common divisor of the $m$ minors of $A_T$ is one.
\end{proof}

\section{Critical multibranched surfaces}

We say that a multibranched surface $X$ is {\em critical} if $X$ cannot be embedded into $S^3$ and for any $x\in X$, $X-x$ can be embedded into $S^3$.
We note that any multibranched surface can be embedded into the $4$-sphere $S^4$.
It is well-known that a multibranched surface which is homeomorphic to a non-orientable closed surface is critical.
This can be seen also by Example \ref{projective_plane} and Theorem \ref{main}
in the case for the real projective plane.

\begin{theorem}\label{X_1}
Let $X_1$ be a multibranched surface with a single sector $e_1$ which is homeomorphic to a planar surface, and with a single branch $l_1$.
Then $X_1$ cannot be embedded into $S^3$ if and only if $|ad_{e_1}(l_1)|\ge 2$.
Moreover, if $X_1$ cannot be embedded into $S^3$, then it is critical.
\end{theorem}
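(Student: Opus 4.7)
My plan is to establish the three parts of the statement in turn: (i) $|ad_{e_1}(l_1)|\ge 2\Rightarrow X_1$ does not embed in $S^3$; (ii) $|ad_{e_1}(l_1)|\le 1\Rightarrow X_1$ embeds in $S^3$; and (iii) if $X_1$ does not embed then $X_1$ is critical.

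For (i) I will apply Theorem~\ref{main}. Since $X_1$ has $n=1$ branch and a single sector, every abstract dual graph $G_X$ has exactly one edge, so its first Betti number satisfies $m\in\{0,1\}$. In the case $m=1$, the algebraic degree matrix is the $1\times 1$ matrix $A_T=[ad_{e_1}(l_1)]$, whose only $1$-minor has absolute value $|ad_{e_1}(l_1)|\ge 2$, so condition~(2) of Theorem~\ref{main} holds. The main obstacle will be ruling out the case $m=0$: I plan to show that $m=0$ (i.e., $e_1^+$ and $e_1^-$ lie in distinct closed surfaces $R_j$) occurs precisely when the signs of the $N=\sum_k|d_k|$ oriented sheets at $l_1$ can be cyclically arranged to alternate, since the gluing at each gap between consecutive sheets is non-crossing (matching $+$ with $+$ and $-$ with $-$) exactly when those two sheets have opposite signs. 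An alternating cyclic pattern of length $N$ forces equal positive/negative counts, hence $\sum_k d_k = ad_{e_1}(l_1)=0$. Under the hypothesis $|ad_{e_1}(l_1)|\ge 2$ this is impossible, so condition~(2) applies to every $G_X$ and Theorem~\ref{main} yields non-embeddability.

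For (ii), under $|ad_{e_1}(l_1)|\le 1$, I will realize $l_1$ as the core of an unknotted solid torus $N\subset S^3$ and build $e_1$ explicitly. When $ad_{e_1}(l_1)=0$ the boundary components of $e_1$ can be partitioned into matched positive/negative pairs; each pair will be realized by a planar annular band in the complementary solid torus $S^3\setminus\mathrm{int}\,N$, with meridional framings chosen so that the total homology class of $\partial e_1$ vanishes in $H_1(S^3\setminus\mathrm{int}\,N)$, and the union assembles into an embedded planar $e_1$ with the correct attaching data. When $|ad_{e_1}(l_1)|=1$, one leftover boundary circle will be absorbed by a Seifert disk of $l_1$ and the remaining components paired as before.

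For (iii), assume $|ad_{e_1}(l_1)|\ge 2$ and fix $x\in X_1$. If $x\in l_1$, then $l_1\setminus\{x\}$ is an arc and $X_1\setminus\{x\}$ deformation retracts onto $e_1/\partial e_1$, which is homotopy equivalent to $S^2$ wedged with $b-1$ copies of $S^1$; the corresponding embedding is produced concretely by placing the arc $l_1\setminus\{x\}$ in $S^3$ and attaching the planar sector with its $N$ sheets fanning freely around the arc, no monodromy obstruction arising because the branch is now contractible. If $x\in\mathrm{int}\,e_1$, then $e_1\setminus\{x\}$ acquires an extra free boundary component; a Mayer--Vietoris computation shows $H_1(X_1\setminus\{x\})\cong\mathbb{Z}^b$ is torsion-free, consistent with embeddability, and I will construct the embedding by a helix-type parametrization along $l_1$, extending the generalized M\"obius-band picture for the disk case $b=1$ to the general planar sector by threading matched pairs through the complementary solid torus as in part~(ii) while letting the free boundary accommodate the residual twist. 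The main difficulty throughout is the sign-alternation characterization of when $m=0$ occurs; once this is in hand, the explicit embedding and criticality constructions proceed straightforwardly.
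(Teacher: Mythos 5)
Your treatment of the forward direction is correct and follows the paper's route: every abstract dual graph of $X_1$ has a single edge, and Theorem \ref{main} is applied to the $1\times 1$ matrix $[ad_{e_1}(l_1)]$. You add something the paper only asserts, namely a proof that the dual graph must be a bouquet: at each gap between cyclically adjacent sheets along $l_1$, the gluing joins a $+$ side to a $-$ side exactly when the two sheets induce the same orientation on $l_1$, so $e_1^+$ and $e_1^-$ can lie in distinct closed surfaces only if the signs alternate around $l_1$, which forces $N$ even with equal positive and negative counts and hence $ad_{e_1}(l_1)=0$. That argument is sound, and it genuinely fills the paper's unexplained observation that ``any abstract dual graph of $X_1$ is a bouquet'' (which, as your analysis shows, is only true under the standing hypothesis $ad_{e_1}(l_1)\neq 0$).

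The gap is in part (ii), and part (iii) partly inherits it. Your construction for $|ad_{e_1}(l_1)|\le 1$ tacitly assumes that every boundary component of $M_1$ attaches to $l_1$ with degree $\pm 1$: only then can the components be ``partitioned into matched positive/negative pairs'' realized by annular bands, with at most one leftover circle capped by a Seifert disk. If the attaching degrees have mixed absolute values the construction does not make sense, and in fact no construction can succeed: take $e_1$ an annulus attached with degrees $2$ and $-1$, so $ad_{e_1}(l_1)=1$. If this $X_1$ embedded in $S^3$, the frontier of a regular neighborhood of $l_1$ in $X_1$ would consist of two disjoint essential simple closed curves on the torus $\partial N(l_1)$ with homology classes $2\lambda+q_1\mu$ and $\pm(\lambda+q_2\mu)$; disjoint essential curves on a torus are parallel, so this is impossible, and $X_1$ does not embed even though $|ad_{e_1}(l_1)|=1$. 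So the converse direction requires either the implicit hypothesis that all attaching degrees are $\pm 1$ (which is evidently what both you and the paper's Figure \ref{criticality_X_1} have in mind, with its ``pairs of branches with different orientations'') or a different statement; your proposal does not close this gap, though to be fair the paper's own one-sentence treatment of the converse does not either. Two smaller points: for $x\in l_1$, the space $X_1\setminus\{x\}$ does not deformation retract onto $e_1/\partial e_1$ (for the projective plane it is a M\"obius band, homotopy equivalent to $S^1$, not to $S^2$), although the fan-of-sheets embedding you describe along the arc $l_1\setminus\{x\}$ does not actually depend on that claim; and the interior-point case of criticality again threads pairs ``as in part (ii)'' and so needs the same degree restriction.
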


\begin{proof}
First we observe that any abstract dual graph of $X_1$ is a bouquet.
Since $det(A_T)= ad_{e_1}(l_1)$, by Theorem \ref{main}, $X_1$ cannot be embedded into the 3-sphere if $|ad_{e_1}(l_1)|\ge 2$.
Conversely, if $|ad_{e_1}(l_1)|<2$, then we can construct an embedding of $X_1$ into the 3-sphere.

Second we consider $X_1$ whose one point was removed.
We glue a pair of branches with different orientations on the same side alternatively, and if a pair of branches with a same orientation remained, then we glue them on the different sides by going a long way round as shown in Figure \ref{criticality_X_1}.
\end{proof}

\begin{figure}[htbp]
	\begin{center}
	\includegraphics[trim=0mm 0mm 0mm 0mm, width=.8\linewidth]{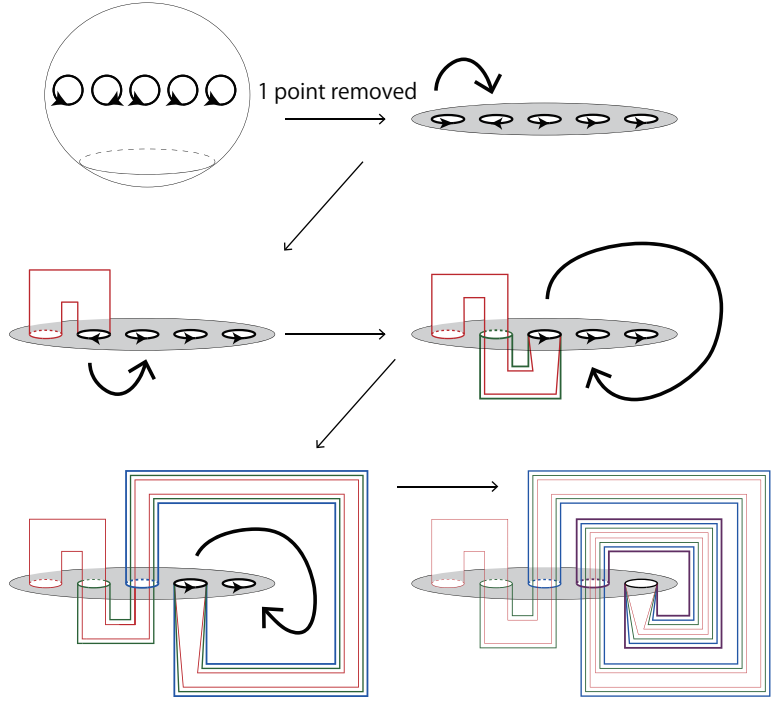}
	\end{center}
	\caption{Criticality of $X_1$}
	\label{criticality_X_1}
\end{figure}

\begin{theorem}\label{X_2}
Let $X_2$ be a multibranched surface obtained from sectors $e_1,\ldots, e_n$ by gluing along branches $l_1,\ldots, l_n$ as shown in Figure \ref{cells_X_2}.
Then $X_2$ is critical.
\end{theorem}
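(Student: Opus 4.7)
The plan is to establish the two defining properties of criticality separately: first, that $X_2$ does not embed into $S^3$; second, that $X_2 - x$ does embed for every point $x \in X_2$.

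For the non-embeddability, I would apply Theorem \ref{main}. From the gluing data in Figure \ref{cells_X_2}, I would enumerate, up to symmetry, the abstract dual graphs $G_{X_2}$ arising from the various circular permutations of sectors along each branch. For each such graph, after choosing a spanning tree $T$, the algebraic degree matrix $A_T$ can be read off directly from the prescribed boundary words; its row count $m$ (the first Betti number of $G_{X_2}$) and column count $n$ (the number of branches) are determined by the combinatorics. The remaining task is then to verify that condition (1) or (2) of Theorem \ref{main} holds in every case: either $m > n$, or else the greatest common divisor of the $m$-minors of $A_T$ is at least $2$.

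For the criticality, I would construct an explicit embedding of $X_2 - x$ into $S^3$ for an arbitrary removed point $x$. By a small isotopy it is enough to treat the case $x \in \mathrm{int}(e_i)$ for some sector $e_i$, since a puncture on a branch can be pushed into an adjacent sector. Once $e_i$ is punctured it becomes a planar surface with an extra boundary circle, and this new circle can be sent out through the complementary region. This extra freedom should allow one to reroute $e_i$ in the style of Theorem \ref{X_1}, while the remaining sectors and branches are arranged so that each branch lies on the common boundary of the adjacent sectors in the prescribed cyclic order, with no forced linking obstruction.

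The hard part is expected to be the first step: proving uniformly, across all abstract dual graphs $G_{X_2}$ and all spanning trees $T$, that $A_T$ violates the embeddability criterion of Theorem \ref{main}. Exploiting the cyclic symmetry of $X_2$ should reduce the case analysis substantially, but the computational crux is showing that for every admissible choice of circular permutations and of $T$, the gcd of the $m$-minors of $A_T$ is at least $2$ (or that $m > n$). Once this uniform bound is established, combined with the explicit embeddings of $X_2 - x$ from the previous paragraph, criticality follows at once from the definition.
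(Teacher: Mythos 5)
Your overall strategy coincides with the paper's: rule out embeddings via Theorem \ref{main}, then exhibit an explicit embedding of the punctured complex. The second half is essentially what the paper does (it removes the sector $e_n$, embeds the rest, and then re-inserts a once-punctured copy of $e_n$), so that part is fine modulo drawing the pictures.

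However, the first half contains a genuine gap, and it is exactly the step you yourself flag as the ``computational crux.'' You propose to enumerate all abstract dual graphs arising from all circular permutations of sectors along the branches and to check conditions (1)/(2) of Theorem \ref{main} case by case. You never supply an argument that makes this enumeration finite in any useful sense or uniform in $n$: the number of admissible circular permutations grows factorially, the cyclic symmetry of $X_2$ only cuts this down by a factor of order $n$, and you give no mechanism for controlling $m$, the spanning tree $T$, or the resulting minors across all cases. The paper closes precisely this gap with a structural observation rather than an enumeration: Lemma \ref{bipartite} shows that for \emph{every} choice of circular permutations, the bipartite gluing graph on the copies $e_i\times\{\pm 1\}$ is connected, so the parallel copies of the sectors close up into a \emph{single} surface $R_1$. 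Consequently every abstract dual graph $G_{X_2}$ is a bouquet with one vertex and $n$ loops; then $m=n$, the spanning tree is forced to be the single vertex, and $A_T$ is always the same matrix (zeros on the diagonal, ones elsewhere), with $|\det A_T|=n-1\ge 2$. Without an argument of this kind your plan does not terminate in a proof. A smaller issue: in your reduction of the puncture location, removing a point from a branch is not literally the same space as removing a point from an adjacent sector, so ``pushing the puncture into a sector'' needs a word of justification (one must check that opening up the branch at a point still embeds); the paper is also terse here, but your phrasing suggests an isotopy that does not exist as stated.
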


\begin{figure}[htbp]
	\begin{center}
	\includegraphics[trim=0mm 0mm 0mm 0mm, width=.8\linewidth]{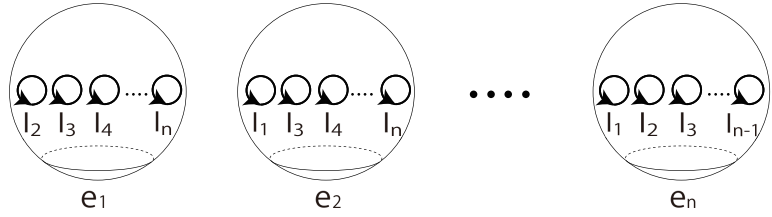}
	\end{center}
	\caption{Sectors $e_1,\ldots, e_n$ forming $X_2$}
	\label{cells_X_2}
\end{figure}

\begin{proof}
It follows by the following Lemma \ref{bipartite} that the closed surfaces obtained from all the parallel copies $e_i \times \{ \pm 1\}$ of each sector $e_i$ are connected, where $e_i \times \{ \pm 1\}$ corresponds to $v_i^{\pm}$.
This shows that any abstract dual graph $G_{X_2}$ of $X_2$ is a bouquet with $n$ loops.
Then a spanning tree $T$ of $G_{X_2}$ is a single vertex, and the $n\times n$ algebraic degree matrix $A_T$ is:
\[
A_T=\left( \begin{array}{cccccc}
0 & 1 & \cdots & \cdots & 1 \\
1 & 0 & \ddots & \ddots & \vdots \\
\vdots & \ddots & \ddots & \ddots & \vdots \\
\vdots & \ddots & \ddots & \ddots & 1 \\
1 & \cdots & \cdots & 1 & 0
\end{array} \right)
\]
Since $det(A_T)=(-1)^{n+1}(n-1)$,  we have $|det(A_T)|=n-1 \geq 2$.
Hence by Theorem \ref{main}, $X_2$ cannot be embedded into the 3-sphere.

To show the criticality of $X_2$, we construct an embedding of $X_2'$ which is obtained from $X_2$ by removing  $e_n$ (Figure \ref{X_2'}).
\begin{figure}[htbp]
	\begin{center}
	\includegraphics[trim=0mm 0mm 0mm 0mm, width=.4\linewidth]{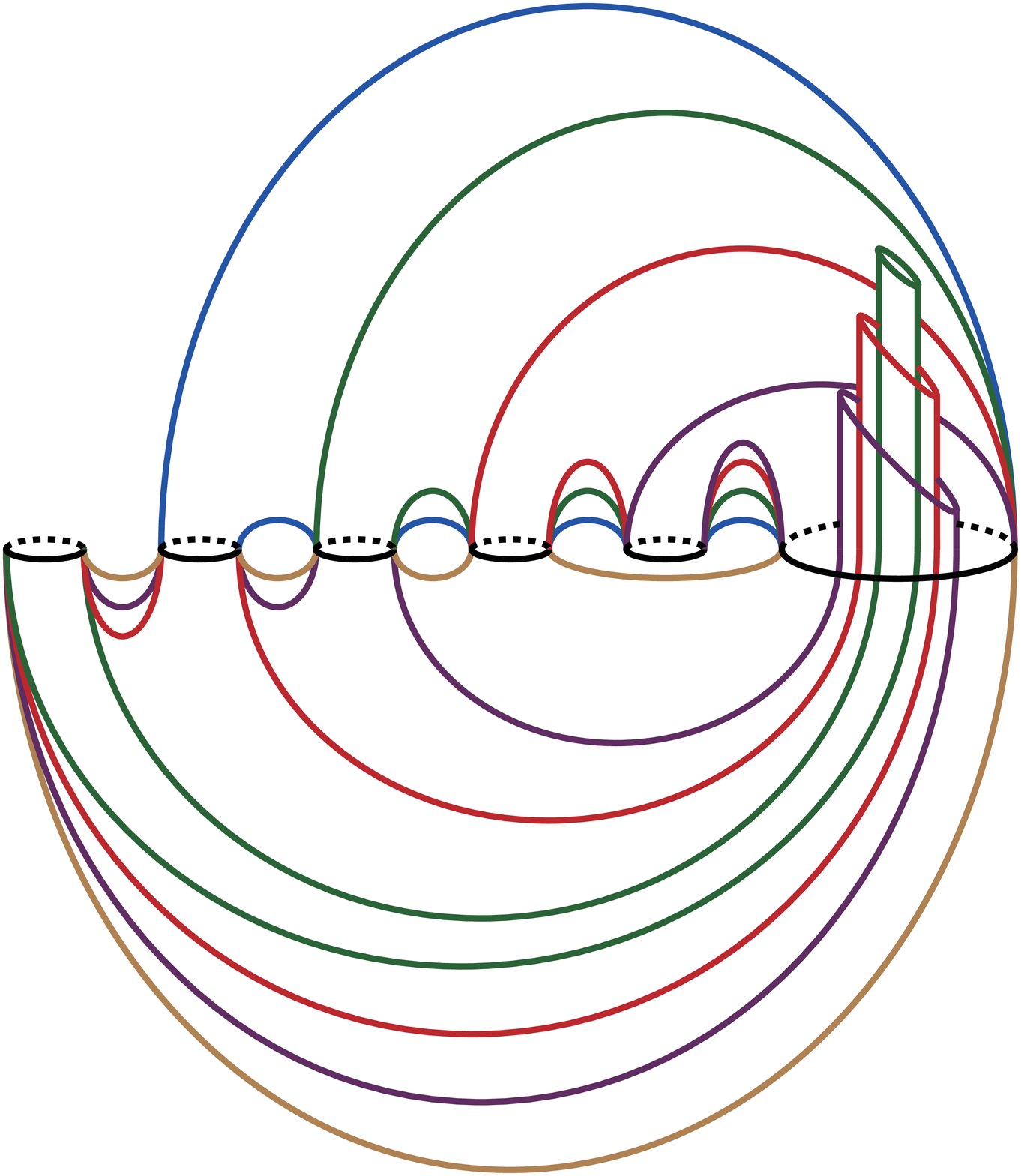}
	\end{center}
	\caption{An embedding of $X_2'$ for $n=6$}
	\label{X_2'}
\end{figure}
Next we add $e_n$ whose one point is removed to $X_2'$ (Figure \ref{cells_X_2'}).
\begin{figure}[htbp]
	\begin{center}
	\includegraphics[trim=0mm 0mm 0mm 0mm, width=.8\linewidth]{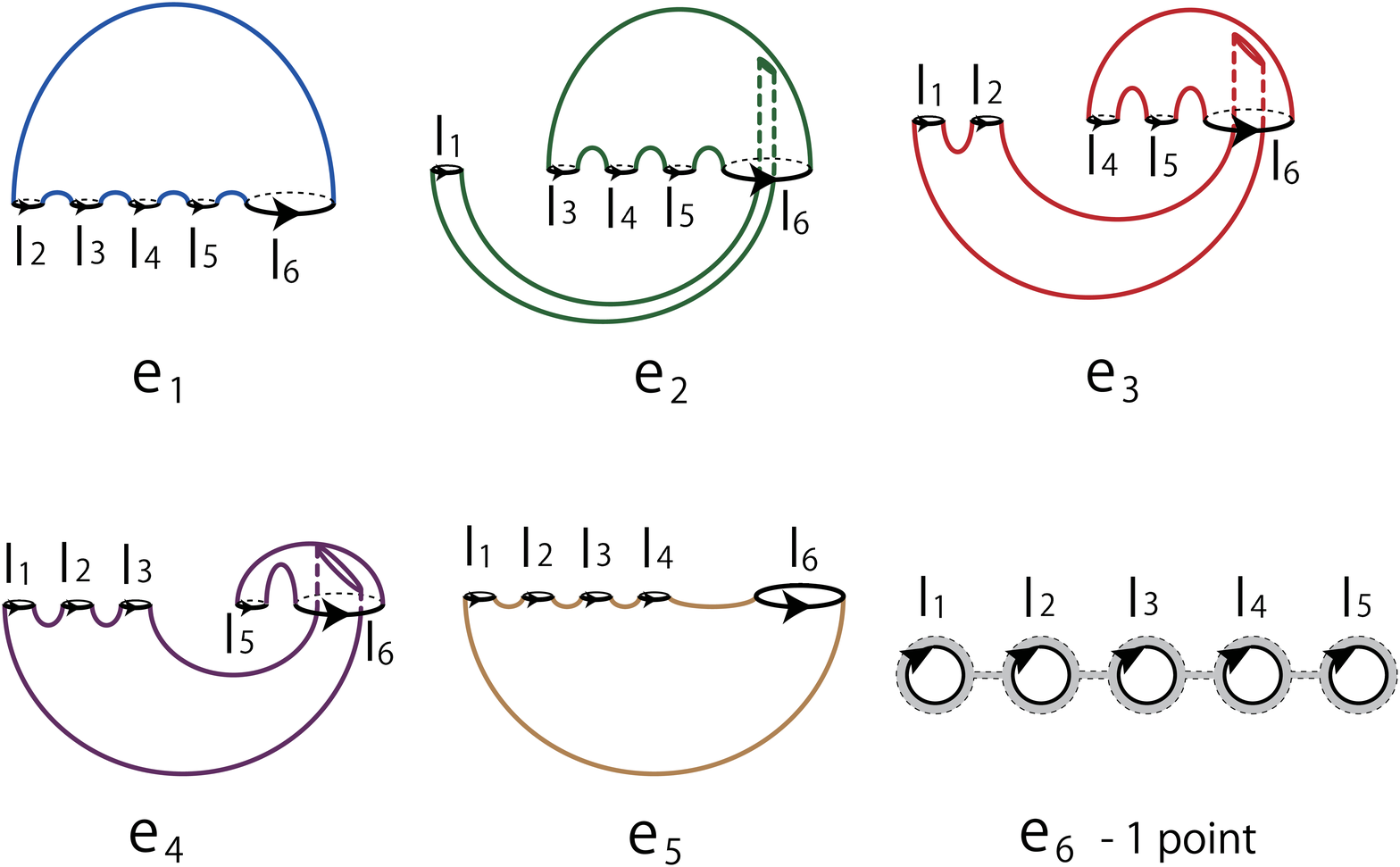}
	\end{center}
	\caption{Embeddings of each sector of $X_2$ for $n=6$}
	\label{cells_X_2'}
\end{figure}
Then we have an embedding of $X_2$ whose one point is removed into the 3-sphere.
\end{proof}

\begin{lemma}\label{bipartite}
Let $G$ be a bipartite graph with $2n$ vertices $\{v_1^+, \ldots, v_n^+\}\cup\{v_1^-, \ldots, v_n^-\}$
obtained by adding edges as follows.
For each $m$ in $\{1, \ldots, n\}$, choose a circular permutation $b_1, b_2, \dots, b_{n-1}$ of $\{1, \ldots, n\}\setminus \{m\}$, and let $\{v_{b_1}^+, v_{b_2}^-\}, \{v_{b_2}^+, v_{b_3}^-\}, \ldots, \{v_{b_{n-2}}^+, v_{b_{n-1}}^-\}, \{v_{b_{n-1}}^+, v_{b_1}^-\}$ be edges of $G$.
We add such edges for every $m$ in $\{1, \ldots, n\}$.
Then $G$ is connected.
\end{lemma}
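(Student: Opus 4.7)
The plan is to argue by contradiction: suppose $G$ has at least two connected components and derive a combinatorial impossibility.

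For each component $C$, I would introduce the two index sets $I = \{i : v_i^+ \in C\}$ and $J = \{j : v_j^- \in C\}$; the first intermediate goal is to prove $I = J$. The key observation is that, for each fixed $m$, the $n-1$ edges arising from the circular permutation at $m$ form a perfect matching between $\{v_i^+ : i \neq m\}$ and $\{v_j^- : j \neq m\}$, because both endpoint sequences $b_1,\dots,b_{n-1}$ and $b_2,\dots,b_{n-1},b_1$ are enumerations of $\{1,\dots,n\}\setminus\{m\}$. Restricted to $C$, this matching pairs $+$-vertices of $C$ with $-$-vertices of $C$, so $|I\setminus\{m\}|=|J\setminus\{m\}|$ for every $m$. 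A short case analysis then handles the possibilities $m\in I\setminus J$, $m\in I\cap J$, $m\in J\setminus I$, $m\notin I\cup J$: assuming $I\setminus J\neq\emptyset$ forces $|I|=|J|+1$ (taking $m\in I\setminus J$), and then taking $m\in I\cap J$ or $m\in J$ yields a contradiction; consequently $J=\emptyset$, which combined with bipartiteness would make $C$ consist of isolated $+$-vertices, impossible since every vertex of $G$ has degree $n-1\geq 2$. Symmetrically $J\setminus I=\emptyset$, so $I=J$.

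With $I_s=J_s$ for every component $C_s$, the index sets form a partition of $\{1,\dots,n\}$. For each $m$, the matching at $m$ sends each $I_s\setminus\{m\}$ into itself, so as a permutation of $\{1,\dots,n\}\setminus\{m\}$ it preserves the partition; but this permutation is the single $(n-1)$-cycle $b_1\mapsto b_2\mapsto\cdots\mapsto b_{n-1}\mapsto b_1$, which acts transitively, so $\{1,\dots,n\}\setminus\{m\}$ lies in a single block $B_m$. For any two distinct $m,m'$ (and $n\geq 3$) the blocks $B_m$ and $B_{m'}$ share any element of $\{1,\dots,n\}\setminus\{m,m'\}$, hence coincide, and the common block then contains all of $\{1,\dots,n\}$, contradicting the assumption of two or more components.

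The step I expect to be hardest is the case analysis forcing $I=J$: several cardinality identities have to be chained together cleanly and combined with the degree bound $n-1\geq 2$. The closing transitivity argument, once the partition is in hand, is comparatively routine.
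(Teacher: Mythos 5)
Your proof is correct and takes essentially the same route as the paper's: your counting argument that the $+$- and $-$-index sets of each component coincide is exactly the content of the paper's Claims 1 and 2, and your transitivity step using the single $(n-1)$-cycle at each $m$ is the paper's concluding propagation argument. Both versions tacitly require $n\ge 3$ (for $n=2$ the graph is two disjoint edges, so the lemma as literally stated fails), which holds in the paper's application.
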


\begin{proof}
Suppose that $G$ is disconnected, and let $V_1$ and $V_2$ be sets of vertices such that there is no edges connecting a vertex of $V_1$ with one of $V_2$.
Put $V_i^{\pm}=V_i\cap \{v_1^{\pm},\ldots, v_n^{\pm}\}$ for $i=1, 2$.

\begin{claim}\label{claim1}
$|V_1^+|=|V_1^-|$ and $|V_2^+|=|V_2^-|$.
\end{claim}
\begin{proof}
For $v_m^+\in V_2^+$, consider a circular permutation of $\{1, \ldots, n\}\setminus \{m\}$.
Then each vertex of $V_1^+$ is connected with distinct vertices of $V_1^-$ and hence $|V_1^+|\le |V_1^-|$.
The converse holds by the same argument.
\end{proof}

\begin{claim}\label{claim2}
$v_m^+\in V_1^+$ if and only if $v_m^-\in V_1^-$
\end{claim}
\begin{proof}
Suppose that $v_m^+\in V_1^+$ and $v_m^-\in V_2^-$.
Consider a circular permutation of $\{1, \ldots, n\}\setminus \{m\}$.
Then $|V_1^+ \setminus \{v_m^+\}|<|V_1^-|$ and hence there exists an edge from a vertex of $V_2^+$ to one of $V_1^-$.
This contradicts the supposition.
\end{proof}

Without loss of generality, we may assume that $v_1^+\in V_1^+$.
Considering a circular permutation of $1, \ldots, n-1$, by Claim \ref{claim2}, $v_1^+, v_1^-, v_2^+, \ldots, v_{n-1}^- \in V_1$.
Moreover, considering a circular permutation of $2, \ldots, n$, by Claim \ref{claim2}, $v_n^+, v_n^-\in V_1$.
This is a contradiction.
\end{proof}

\begin{theorem}\label{X_3}
Let $X_3$ be a multibranched surface obtained from sectors $e_1,\ldots, e_n$ by gluing along branches $l_1,\ldots, l_n$, where $n \ge 2$, $k_i \ge 1$, $k_1k_2k_3 \cdots k_n \ge 3$ as shown in Figure \ref{cells_X_3}.
Then $X_3$ is critical.
\end{theorem}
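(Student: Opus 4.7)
The plan is to mirror the two-part argument of Theorem \ref{X_2}: first obstruct any embedding of $X_3$ into $S^3$ via Theorem \ref{main}, then exhibit explicit embeddings of $X_3-x$ for every $x\in X_3$ to establish criticality.

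I would begin by verifying that any abstract dual graph $G_{X_3}$ is a bouquet with a single vertex and $n$ loops, exactly as in the proof of Theorem \ref{X_2}. The local circular permutations read off from Figure \ref{cells_X_3} should satisfy the hypotheses of Lemma \ref{bipartite}, so that the closed surfaces obtained from the parallel copies $e_i\times\{\pm1\}$ coalesce into a single vertex. A spanning tree $T$ of $G_{X_3}$ is then a single vertex, and $A_T$ is an $n\times n$ matrix whose entries $ad_{e_i}(l_j)$ reflect the multiplicities $k_j$ at each branch. By analogy with Theorem \ref{X_2}, I expect $A_T$ to arise from the matrix in that proof by scaling each column $j$ (equivalently, each branch $l_j$) by $k_j$, giving $|\det A_T| = (n-1)\, k_1 k_2 \cdots k_n$. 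The hypotheses $n\ge 2$ and $k_1\cdots k_n\ge 3$ are exactly calibrated to force $|\det A_T|\ge 2$: when $n=2$ one has $k_1 k_2\ge 3$, while for $n\ge 3$ the factor $n-1\ge 2$ takes over. Since $m=n$, the only $m$-minor is $\det A_T$ itself, so its gcd condition in Theorem \ref{main}(2) fails, and $X_3$ admits no embedding in $S^3$.

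For criticality, I would adapt the two-step construction from Theorem \ref{X_2}. First, embed the reduced complex $X_3 - e_n$ into $S^3$ following the pattern of Figure \ref{X_2'}, adjusted so that each branch $l_j$ locally carries the $k_j$ sheets of the sectors meeting it in the prescribed circular order. Then re-attach $e_n$ minus a small open disk about the removed point $x$ by a ``long way round'' sweep, as in Figure \ref{cells_X_2'}. Near each branch, the sheets already present form nested parallels in a tubular neighborhood, so the missing sheet can be threaded in without self-intersections. If $x$ instead lies on a branch $l_j$, removing $x$ from $l_j$ breaks the circular identification there into an arc identification, which unfolds the local picture into the standard one and reduces to the previous case.

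The main technical obstacle is the criticality step. Non-embeddability reduces to a clean determinant calculation once the dual graph is identified, but producing an explicit PL embedding of $X_3-x$ for every point $x$ requires careful bookkeeping: the multiplicities $k_j$ break the visual symmetry that made the $X_2$ picture compact, and one must verify that when several parallel sheets are routed through each branch, the ``long way round'' attachment of the punctured sector remains embedded and disjoint from the rest of $X_3$.
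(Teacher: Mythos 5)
Your overall strategy coincides with the paper's (identify the dual graph as a bouquet, reduce condition (2) of Theorem \ref{main} to a single determinant, then build an explicit embedding of $X_3$ minus a point), but there is a genuine gap in the non-embeddability half: you have guessed the wrong incidence structure for $X_3$, and hence the wrong matrix. $X_3$ is not $X_2$ with columns rescaled. In Figure \ref{cells_X_3} each sector $e_i$ meets only two branches: it wraps $k_i$ times around $l_i$ and once, with opposite sign, around $l_{i+1}$ (indices cyclic). The algebraic degree matrix is therefore the cyclic bidiagonal matrix with $k_1,\ldots,k_n$ on the diagonal, $-1$ on the superdiagonal, and $-1$ in the lower-left corner, and
\[
\det(A_T)=k_1k_2\cdots k_n-1 ,
\]
not $(n-1)\,k_1\cdots k_n$. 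The hypothesis $k_1\cdots k_n\ge 3$ is calibrated precisely so that $\det(A_T)\ge 2$; under your formula that hypothesis would be redundant for every $n\ge 3$ (since $(n-1)\ge 2$ already), which should have been a warning sign that the guessed structure was wrong. Since the determinant computation is the entire content of the obstruction step, this is not a cosmetic slip: with the correct matrix your claimed value and your case analysis ($n=2$ versus $n\ge3$) both have to be discarded and redone. A further minor point: Lemma \ref{bipartite} plays no role here --- it is tailored to the incidence pattern of $X_2$; for $X_3$ the paper simply observes directly that the dual graph is a bouquet with $n$ loops.

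The criticality half of your plan (embed $X_3-e_n$, then thread in $e_n$ minus a point by a ``long way round'' sweep, and handle points on branches by unfolding the circular identification) is the same outline as the paper's Figures \ref{X_3'} and \ref{cells_X_3'}, and would be acceptable at the level of detail the paper itself provides, once the local picture at each branch is corrected to reflect that $l_i$ carries the $k_i$-fold boundary of $e_i$ together with a single boundary circle of $e_{i-1}$, rather than sheets from all $n-1$ other sectors.
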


\begin{figure}[htbp]
	\begin{center}
	\includegraphics[trim=0mm 0mm 0mm 0mm, width=.8\linewidth]{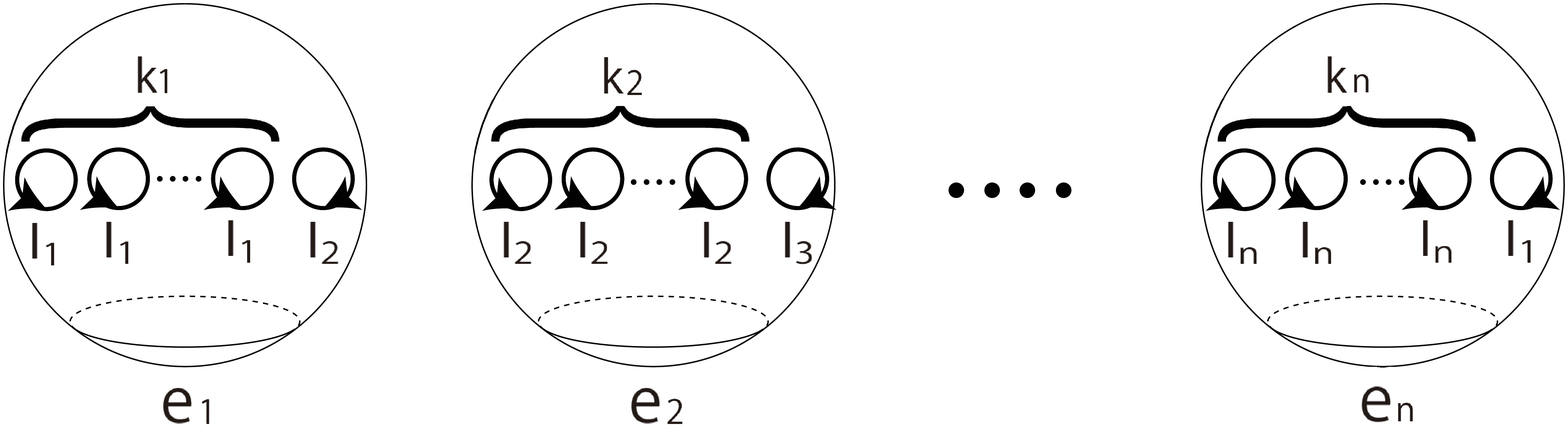}
	\end{center}
	\caption{Sectors $e_1,\ldots, e_n$ forming $X_3$}
	\label{cells_X_3}
\end{figure}

\begin{proof}
It can be observed that any abstract dual graph $G_{X_3}$ of $X_3$ is a bouquet with $n$ loops.
Then a spanning tree of $G_{X_3}$ is a single vertex, and the $n\times n$ algebraic degree matrix $A_T$ is:
\[
A_T=\left( \begin{array}{cccccc}
k_{1} & -1 & 0 & \cdots & \cdots & 0 \\
0 & k_{2} & -1 & 0 & \ddots & \vdots\\
\vdots & 0 & k_3 & -1 & \ddots & \vdots\\
\vdots & \ddots & \ddots & \ddots & \ddots & 0\\
0 & \ddots & \ddots & \ddots & k_{n-1} & -1\\
-1 & 0 & \cdots & \cdots & 0 & k_n
\end{array} \right)
\]
Since $det(A_T)=k_1k_2k_3\cdots k_n -1 \geq 2$, by Theorem \ref{main}, $X_3$ cannot be embedded into the 3-sphere.

To show the criticality of $X_3$, we construct an embedding of $X_3'$ which is obtained from $X_3$ by removing $e_n$ (Figure \ref{X_3'}).
\begin{figure}[htbp]
	\begin{center}
	\includegraphics[trim=0mm 0mm 0mm 0mm, width=.4\linewidth]{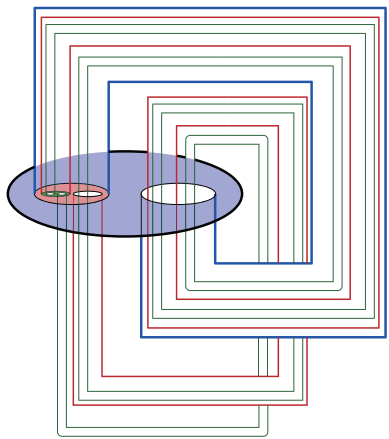}
	\end{center}
	\caption{An embedding of $X_3'$ for $n=4$}
	\label{X_3'}
\end{figure}
Next we add $e_n$ whose one point is removed to $X_3'$ (Figure \ref{cells_X_3'}).
\begin{figure}[htbp]
	\begin{center}
	\includegraphics[trim=0mm 0mm 0mm 0mm, width=.8\linewidth]{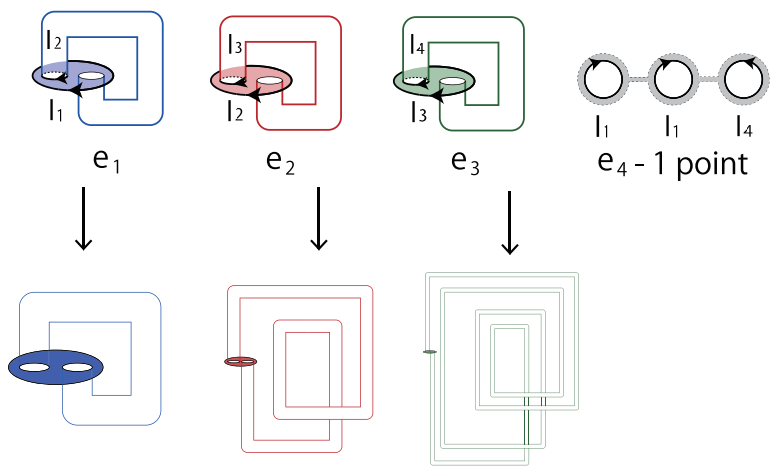}
	\end{center}
	\caption{Embeddings of each sector of $X_3$ for $n=4$}
	\label{cells_X_3'}
\end{figure}
Then we have an embedding of $X_3$ whose one point is removed into the 3-sphere.
\end{proof}

\section{Problems}

At the time of writing, we could not find an example of a connected, compact, orientable, multibranched surface which does not satisfy both conditions (1) and (2) of Theorem \ref{main}, but cannot be embedded into the 3-sphere.
Thus the following problem remains open.

\begin{problem}
Does the converse of Theorem \ref{main} hold?
\end{problem}

In the proof of Theorem \ref{X_1}, \ref{X_2} and \ref{X_3}, we have shown that any abstract dual graph of the multibranched surface is a bouquet.
It seems that this follows from the criticality of the multibranched surface, and we would raise the following problem.

\begin{problem}
Is any abstract dual graph of a critical multibranched surface a bouquet?
\end{problem}



\bibliographystyle{amsplain}

\begin{thebibliography}{10}


\bibitem{E} R. Engelking, {\em Dimension Theory}, North-Holland, Amsterdam (1978).


\bibitem{K} K. Kuratowski, {\em Sur le probl\`{e}me des courbes gauches en topologie}, Fund. Math. {\bf 15} (1930) 271--283.

\bibitem{MSTW} J. Matou\u{s}ek, E. Sedgwick, M. Tancer, U. Wagner, {\em Embeddability in the 3-sphere is decidable}, SOCG'14 Proceedings of the thirtieth annual symposium on Computational geometry, ACM (2014) 78--84. 

\end{thebibliography}

\end{document}